\documentclass[12pt]{amsart}

\usepackage{amssymb,latexsym}

\usepackage{enumerate}

\makeatletter

\@namedef{subjclassname@2010}{

  \textup{2010} Mathematics Subject Classification}

\makeatother
\newtheorem{thm}{Theorem}[section]
\newtheorem{Con}[thm]{Conjecture}

\newtheorem{cor}[thm]{Corollary}
\newtheorem{lem}[thm]{Lemma}
\newtheorem{pro}[thm]{Proposition}
\theoremstyle{definition}

\newtheorem{rem}[thm]{Remark}

\numberwithin{equation}{section}

\newcommand{\pr}{\mathbb{P}}
\newcommand{\ex}{\mathbb{E}}
\newcommand{\re}{\textup{Re}}
\newcommand{\im}{\textup{Im}}

\frenchspacing

\textwidth=15.5cm

\textheight=23cm

\parindent=16pt

\oddsidemargin=0cm

\evensidemargin=0cm

\topmargin=-0.5cm

\begin{document}

\baselineskip=17pt

\title{A bias in  Mertens' product formula}

\author[Youness Lamzouri]{Youness Lamzouri}

\address{Department of Mathematics and Statistics,
York University,
4700 Keele Street,
Toronto, ON,
M3J1P3
Canada}

\email{lamzouri@mathstat.yorku.ca}

\date{}

\begin{abstract}  Rosser and Schoenfeld remarked that the product $\prod_{p\leq x}(1-1/p)^{-1}$ exceeds $e^{\gamma} \log x$ for all $2\leq x\leq 10^8$, and raised the question whether the difference changes sign infinitely often. This was confirmed in a recent paper of Diamond and Pintz. In this paper, we show (under certain hypotheses) that there is a strong bias in the race between the product $\prod_{p\leq x}(1-1/p)^{-1}$ and $e^{\gamma}\log x$ which explains the computations of Rosser and Schoenfeld. 

\end{abstract}

\subjclass[2010]{Primary 11N05; Secondary 11M26}

\thanks{The author is partially supported by a Discovery Grant from the Natural Sciences and Engineering Research Council of Canada.}

\maketitle

\section{Introduction}

In 1874 Mertens proved three remarkable results on the distribution of prime numbers. His third theorem asserts that 
$$ \prod_{p\leq x}\left(1-\frac1p\right)^{-1}\sim e^{\gamma}\log x, \text{ as } x\to\infty,$$
where $\gamma$ is the Euler-Mascheroni constant. Rosser and Schoenfeld \cite{RoSc} noticed that for all $2\leq x\leq 10^8$,  we have
\begin{equation}\label{ineq}
\prod_{p\leq x}\left(1-\frac1p\right)^{-1}> e^{\gamma}\log x,
\end{equation}
and suggested that ``perhaps'' one can prove that the difference changes sign for arbitrarily large $x$, in analogy to Littlewood's classical result for $\pi(x)-\text{Li}(x)$. Recently, Diamond and Pintz \cite{DP} investigated this question and confirmed Rosser and Schoenfeld prediction. More precisely, they established that the quantity
\begin{equation}\label{diff}
 \sqrt{x}\left(\prod_{p\leq x}\left(1-\frac1p\right)^{-1}- e^{\gamma}\log x\right)
\end{equation}
attains arbitrarily large positive and negative values as $x\to \infty$. Let $\mathcal{M}$ be the set of real numbers $x\geq 2$ such that 
$$ 
\prod_{p\leq x}\left(1-\frac{1}{p}\right)^{-1}>e^{\gamma}\log x.
$$
Then, Diamond and Pintz result asserts that both $\mathcal{M}$ and its complement are unbounded. Assuming the Riemann hypothesis RH we strengthen this result by proving that both $\mathcal{M}$ and its complement have positive \emph{lower logarithmic densities}. Recall that for a set $S\subset [0, \infty)$, the upper and lower logarithmic densities of $S$ are defined respectively by
$$\overline{\delta}(S)= \limsup_{x\to\infty}\frac{1}{\log x}\int_{t\in S\cap [2,x]}\frac{dt}{t}, \text{ and } \underline{\delta}(S)= \liminf_{x\to\infty}\frac{1}{\log x}\int_{t\in S\cap [2,x]}\frac{dt}{t}.$$
If $\overline{\delta}(S)=\underline{\delta}(S)=\delta(S)$ we say that $\delta(S)$ is the logarithmic density of $S$. 
We prove 
\begin{thm}\label{Positive}
Assume RH. Then $\underline{\delta}(\mathcal{M})>0$ and $\overline{\delta}(\mathcal{M})<1$. 
\end{thm}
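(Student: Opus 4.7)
The plan is to take logarithms, apply the RH explicit formula to convert the race between $\prod_{p\leq x}(1-1/p)^{-1}$ and $e^{\gamma}\log x$ into the sign of a single expression with a visible Chebyshev-type bias, and study that expression via the Wintner limiting-distribution machinery.

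Setting $E(x):=-\sum_{p\leq x}\log(1-1/p)-\gamma-\log\log x$, one has $\prod_{p\leq x}(1-1/p)^{-1}-e^{\gamma}\log x=e^{\gamma}\log x\,(e^{E(x)}-1)$, and $E(x)=o(1)$ by Mertens' theorem, so $\mathcal M$ coincides with $\{x:E(x)>0\}$ up to a bounded set. Expanding $-\log(1-1/p)=1/p+\sum_{k\geq 2}1/(kp^{k})$ and using the Mertens constant identity $\gamma=M+\sum_{p}\sum_{k\geq 2}1/(kp^{k})$ gives $E(x)=\sum_{p\leq x}1/p-\log\log x-M+O(1/x)$. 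Under RH, combining the classical explicit formula $\pi(x)=\textup{Li}(x)-\tfrac{1}{2}\textup{Li}(\sqrt{x})-\sum_{\rho}\textup{Li}(x^{\rho})+O(x^{1/3})$ (with $\rho=\tfrac{1}{2}+i\gamma$) with partial summation on $\sum_{p\leq x}1/p=\pi(x)/x+\int_{2}^{x}\pi(t)/t^{2}\,dt$ should yield, after absorbing convergent constants into $M$, the asymptotic
\[\sqrt{x}\bigl({\textstyle\prod_{p\leq x}}(1-1/p)^{-1}-e^{\gamma}\log x\bigr)=e^{\gamma}\bigl(1+R(x)\bigr)+o(1),\]
where the constant $1$ is the bias produced by the $-\tfrac{1}{2}\textup{Li}(\sqrt{x})$ term (prime squares) and
\[R(x)=\sum_{\gamma>0}\frac{\cos(\gamma\log x)-2\gamma\sin(\gamma\log x)}{\tfrac{1}{4}+\gamma^{2}}\]
is the oscillatory contribution of the zeros. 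Thus $\mathcal M$ coincides, up to a set of zero logarithmic density, with $\{x:R(x)>-1\}$.

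Because $\sum_{\gamma>0}(1+4\gamma^{2})/(\tfrac{1}{4}+\gamma^{2})^{2}<\infty$ by the Riemann--von Mangoldt count of zeros, Wintner's theorem applies to $R$ and yields a limiting logarithmic distribution $\mu_{R}$; term-by-term log-averaging of the trigonometric factors shows $\mu_{R}$ has mean $0$ and strictly positive variance. The lower density bound then follows immediately: a centered non-degenerate distribution cannot live in $(-\infty,-1]$ (its mean would then be $\leq -1$), so $\mu_{R}((-1,\infty))>0$ and $\underline{\delta}(\mathcal M)\geq\mu_{R}((-1,\infty))>0$.

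The harder direction $\overline{\delta}(\mathcal M)<1$ amounts to $\mu_{R}((-\infty,-1))>0$. Here I would invoke the Diamond--Pintz theorem, which via the formula above gives $\inf_{x\geq 2}R(x)=-\infty$, and combine it with the $B^{2}$-almost periodicity of $R$ afforded by the square-summability of its spectrum under RH to show that the essential range of $R$ (with respect to logarithmic measure) is unbounded below, forcing $\mu_{R}$ to charge $(-\infty,-1)$. This upgrade from the ``infinitely often'' of Diamond--Pintz to a positive-density statement for the left tail of $\mu_{R}$ is the main obstacle: under RH alone, without an LI-type hypothesis, the Wintner distribution only guarantees mass on both sides of its mean $0$, not necessarily below $-1$, and it is precisely the interplay between the almost-periodic structure of $R$ and the Diamond--Pintz unboundedness that completes the argument.
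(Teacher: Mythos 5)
Your reduction to the explicit formula and the limiting-distribution framework is sound in outline, and your argument for $\underline{\delta}(\mathcal M)>0$ can be made to work: under RH the truncated zero sums have uniformly bounded second moments, so the limiting distribution $\mu_R$ has a finite mean equal to $0$ and positive variance, and the open-set (portmanteau) inequality then gives $\underline{\delta}(\mathcal M)\geq \mu_R((-1,\infty))>0$ (note a sign slip, though: $2\re\, x^{i\gamma}/(-\tfrac12+i\gamma)=\bigl(-\cos(\gamma\log x)+2\gamma\sin(\gamma\log x)\bigr)/(\tfrac14+\gamma^2)$, which is the negative of your $R$). The genuine gap is the half you flag yourself: $\overline{\delta}(\mathcal M)<1$. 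Your proposed bridge from Diamond--Pintz to positive mass of $\mu_R$ on $(-\infty,-1)$ does not work as stated. Diamond--Pintz is a pointwise $\Omega_{\pm}$ result; it produces values $x$ with $E_M(x)$ very negative, but these could a priori lie in a set of zero logarithmic density, and $B^2$-almost periodicity (which is all RH gives) does not convert attained values into density: unlike Bohr almost periodicity, mean-square approximation by trigonometric polynomials provides no relatively dense set of almost-periods, so the essential range of $E_M$ with respect to logarithmic measure can be strictly smaller than its actual range. Nor do mean $0$ and positive variance help here, since a priori all of the negative-side mass of $\mu_R$ could sit inside $(-1,0)$.

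The paper closes exactly this gap by a different tool, bypassing both the limiting distribution and Diamond--Pintz. After writing $E_M(e^y)=2\sum_{0<\gamma_n<e^Y}\sin(\gamma_n y)/\gamma_n+O(1)$ for $2\le y\le Y$ (take $T=e^Y$ in the RH explicit formula and absorb the bias constant $1$ into the $O(1)$), it invokes the Littlewood-style lower bounds proved by Rubinstein and Sarnak (Section 2.2 of their paper): for every fixed $\lambda\gg 1$, the set of $y\in[2,Y]$ on which this sine sum exceeds $\lambda$, and likewise the set on which it lies below $-\lambda$, each have measure at least $c_1 Y\exp\bigl(-\exp(-c_2\lambda)\bigr)$ once $Y$ is large. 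Choosing $\lambda=A$ larger than the implied $O(1)$ constant yields, in one stroke, both $\underline{\delta}(\mathcal M)>0$ and $\overline{\delta}(\mathcal M)<1$. This large-deviation input --- a lower bound, uniform in $Y$, for the measure of the set where the sum over zeros is large \emph{negative} --- is precisely what your sketch is missing; without it, RH alone does not let you rule out that $\mu_R$ carries no mass below $-1$.
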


\begin{rem} Note that the assumption of the Riemann hypothesis in Theorem \ref{Positive} is ``necessary'' in a certain sense. Indeed, using the work of the author with Ford and Konyagin \cite{FKL} one can show that the existence of certain configurations of zeros of the Riemann zeta function $\zeta(s)$ off the critical line implies that $\delta(\mathcal{M})=0$. 
\end{rem}

A natural question to ask is which of the quantities $\prod_{p\leq x}(1-1/p)^{-1}$ and $e^{\gamma}\log x$ is larger most of the time? Although Diamond and Pintz result shows that both take the lead for arbitrarily large $x$, the computations of Rosser and Schoenfeld seem to suggest that the product $\prod_{p\leq x}(1-1/p)^{-1}$ predominates.   Assuming the Riemann hypothesis together with a further assumption we explain this phenomenon, by showing that the difference $\prod_{p\leq x}\left(1-1/p\right)^{-1}- e^{\gamma}\log x$ has a strong tendency to be positive. The hypothesis we assume is the Linear Independence hypothesis LI, which is the assumption that the positive imaginary parts of the non-trivial zeros of  $\zeta(s)$ are linearly independent over $\mathbb{Q}$.

\begin{thm}\label{Bias} Assume RH and LI. Then the set $\mathcal{M}$ has logarithmic density $$\delta(\mathcal{M})= 0.99999973...$$
\end{thm}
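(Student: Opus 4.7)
The plan is to follow the Rubinstein--Sarnak framework adapted to Mertens' product. The first step is to establish, under RH, an explicit formula of the form
$$
\frac{\sqrt{x}}{e^{\gamma}}\left(\prod_{p\leq x}\left(1-\frac{1}{p}\right)^{-1} - e^{\gamma}\log x\right) = 1 + V(x) + o(1),
$$
in which
$$
V(x) = \sum_{\gamma>0}\frac{2\cos(\gamma\log x + \phi_\gamma)}{|\rho|}, \qquad \phi_\gamma = \arctan(2\gamma),
$$
runs over the ordinates $\gamma$ of the nontrivial zeros $\rho = 1/2+i\gamma$ of $\zeta$, and the deterministic $+1$ is a genuine bias. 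I would derive this by first noting
$$
\log\prod_{p\leq x}\left(1-\frac{1}{p}\right)^{-1} - \gamma - \log\log x = \sum_{p\leq x}\frac{1}{p} - \log\log x - M + O\!\left(\frac{1}{x\log x}\right),
$$
where $M$ is the Meissel--Mertens constant, then using partial summation to express the main term as $E(x)/x - \int_x^\infty E(t)/t^2\,dt$ with $E(t) := \pi(t) - \mathrm{li}(t)$, and finally substituting the Riemann--von Mangoldt explicit formula $E(t) = -\tfrac{1}{2}\mathrm{li}(\sqrt{t}) - \sum_\rho \mathrm{li}(t^\rho) + O(1)$. The deterministic $+1$ arises from the $\mathrm{li}(\sqrt{t})$ contribution (the prime-square bias familiar from the race between $\pi(x)$ and $\mathrm{li}(x)$), which contributes $-1$ from $E(x)/x$ and $-2$ from the tail integral; the oscillating piece $V(x)$ comes from the corresponding contributions of $\mathrm{li}(t^\rho)$ after combining conjugate pairs. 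Since $\mathcal{E}(x) := \log\prod_{p\leq x}(1-1/p)^{-1} - \gamma - \log\log x$ and $\prod_{p\leq x}(1-1/p)^{-1} - e^{\gamma}\log x$ share sign for large $x$, membership in $\mathcal{M}$ is asymptotically governed by the sign of $1 + V(x)$.

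The second step is the random-model analysis. Under LI the positive ordinates $\gamma_n$ are $\mathbb{Q}$-linearly independent, so Weyl equidistribution shows that $(\gamma_n \log x \bmod 2\pi)_{n\geq 1}$ is equidistributed on $\mathbb{T}^\infty$ with respect to the logarithmic measure $dx/x$. Truncating the sum defining $V(x)$ at height $T$ and estimating the tail via $\sum_{\gamma>T}2/|\rho|^2 = O(\log T/T)$, one passes to the limit as $T\to\infty$ and concludes that $V(x)$ has a limiting logarithmic distribution equal to that of the almost-surely convergent random sum
$$
V_{\mathrm{rand}} := \sum_{\gamma>0}\frac{2\cos U_\gamma}{|\rho|},
$$
with $\{U_\gamma\}$ i.i.d.\ uniform on $[0,2\pi]$ (the deterministic phases $\phi_\gamma$ are absorbed by translation invariance of the uniform distribution). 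Because $V_{\mathrm{rand}}$ is a convergent series of independent continuous random variables, its law has no atom, and so
$$
\delta(\mathcal{M}) = \pr\!\left(1 + V_{\mathrm{rand}} > 0\right) = \pr\!\left(V_{\mathrm{rand}} > -1\right).
$$

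The third step is to evaluate this probability numerically. The characteristic function factors as
$$
\ex\!\left[e^{i\xi V_{\mathrm{rand}}}\right] = \prod_{\gamma>0}J_0\!\left(\frac{2\xi}{|\rho|}\right),
$$
where $J_0$ is the Bessel function of the first kind (using $\ex[e^{it\cos U}] = J_0(t)$ for $U$ uniform on $[0,2\pi]$). Since $V_{\mathrm{rand}}$ is symmetric the characteristic function is real, and Fourier inversion yields
$$
\pr\!\left(V_{\mathrm{rand}} > -1\right) = \frac{1}{2} + \frac{1}{\pi}\int_0^\infty \frac{\sin\xi}{\xi}\prod_{\gamma>0}J_0\!\left(\frac{2\xi}{|\rho|}\right)d\xi.
$$
Truncating the product to zeros with $\gamma \leq T$ and controlling the resulting error using $|J_0(t)|\leq 1$ together with the decay $|J_0(t)|\ll t^{-1/2}$ for large $t$, one evaluates the right-hand side to arbitrary precision; for $T$ sufficiently large this gives the stated value $0.99999973\ldots$ (the closeness to $1$ reflects the small variance $\mathrm{Var}(V_{\mathrm{rand}}) = 2\sum_\gamma|\rho|^{-2} \approx 0.05$). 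The principal obstacle is the first step: isolating the deterministic $+1$ cleanly requires careful bookkeeping of the $\mathrm{li}(\sqrt{t})$ contribution and of the oscillating tail integrals $\int_x^\infty \mathrm{li}(t^\rho)/t^2\,dt$ over zeros $\rho$, and one must control the sum over $\rho$ uniformly enough---typically by smoothing with a suitable test function and working on a density-one set of $x$---to justify the passage to logarithmic distribution in the second step.
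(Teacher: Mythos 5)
Your argument is correct in substance and lands on the same Rubinstein--Sarnak framework as the paper, but it differs at two genuine points. First, you derive the explicit formula through $\sum_{p\le x}1/p$, partial summation against $E(t)=\pi(t)-\text{Li}(t)$, and Riemann's explicit formula, extracting the deterministic $+1$ from the $-\tfrac12\text{Li}(\sqrt t)$ term (a $-1$ from $E(x)/x$ and a $+2$ once the sign in front of the tail integral is taken into account); the paper instead works directly with $\sum_{n\le x}\Lambda(n)/(n\log n)$, via a Perron-type formula for $\sum_{n\le x}\Lambda(n)n^{-\alpha}$ integrated over $\alpha$ (Lemmas \ref{Approx1}--\ref{Gamma} and Proposition \ref{MainExplicit}), where the $+1$ appears as the prime-square tail $\sum_{\sqrt x<p\le x}1/(2p^2)\approx 1/(\sqrt x\log x)$. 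Both routes yield $1$ plus an oscillating sum with amplitudes $2/\sqrt{\tfrac14+\gamma_n^2}$, and under LI the phases (indeed even the overall sign of the oscillating part) are immaterial because its limiting law is symmetric, so your $\arctan$ bookkeeping need not be exact. Second, and more significantly, for the numerical value you propose to recompute $\pr(V_{\mathrm{rand}}>-1)$ by Fourier inversion of the Bessel product, i.e.\ to redo the Rubinstein--Sarnak computation with rigorous truncation both in the zeros and in the inversion integral; your inversion formula is correct, but completing the proof this way requires carrying out that computation with explicit error control. The paper sidesteps it entirely: since the $X(\gamma_n)$ are symmetric, $Z=1+2\re\sum_{\gamma_n>0}X(\gamma_n)/\sqrt{\tfrac14+\gamma_n^2}$ has the same law as $-\widetilde Z$, where $\widetilde Z$ is exactly the Rubinstein--Sarnak random variable for the $\pi(x)$ versus $\text{Li}(x)$ race, so $\delta(\mathcal M)=\pr(Z>0)=1-\pr(\widetilde Z>0)=1-\delta_0=0.99999973\ldots$ with $\delta_0$ imported from \cite{RuSa}; this is cleaner and is what makes the stated decimal expansion rigorous without new numerics. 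Your middle step (existence and identification of the limiting logarithmic distribution via LI, plus atomlessness so that the boundary point $-1$ causes no trouble) is at the same level of detail as the paper's appeal to \cite{RuSa} and \cite{ANS}, and is fine.
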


Rubinstein and Sarnak \cite{RuSa} have previously used the hypotheses RH and LI (and their generalizations for Dirichlet $L$-functions) to study several prime number races, including the race between $\pi(x)$ and $\text{Li(x)}$ and the Shanks-R\'enyi race between $\pi(x; q, a)$ and $\pi(x; q, b)$ for different arithmetic progressions $a, b\pmod q$, where $\pi(x;q,a)$ is the number of primes $p\leq x$ such that $p\equiv a \pmod q$. In particular, they explained and quantified Chebyshev's observation in 1853 that primes congruent to $3\pmod 4$ predominate over those congruent to $1\pmod 4$. In general, if $a$ is a non-square modulo $q$
and $b$ is a square modulo $q$ then $\pi(x; q, a)$ has a strong tendency to be larger than
$\pi(x; q, b)$, a phenomenon which has become known as ``Chebyshev's bias''. For more on the history of this subject as well as recent developments, the reader is invited to consult the expository papers of Granville and Martin \cite{GM} and Martin and Scarfy \cite{MS}.

\begin{rem} Under RH and LI, it turns out that $\delta(\mathcal{M})=1-\delta_0$ where $\delta_0$ is the logarithmic density of the set of real numbers $x\geq 2$ for which $\pi(x)>\text{Li}(x)$. We shall explain why this is the case in Section 4 below. 
\end{rem}

Curiously, a similar phenomenon to Chebyshev's bias for primes in arithmetic progressions does not appear when we consider the analogous problem of comparing the Mertens products
\begin{equation}\label{Mertens}
\prod_{\substack{p\leq x\\ p\equiv a \bmod q}}\left(1-\frac1p\right)^{-1}, 
\end{equation}
 for different arithmetic progressions $a\pmod q$. Indeed,  Williams \cite{Wi} proved that for any $(a,q)=1$, there exists a constant $c(a,q)>0$ such that 
$$ \prod_{\substack{p\leq x\\ p\equiv a \bmod q}}\left(1-\frac1p\right)^{-1} \sim c(a, q) (\log x)^{1/\phi(q)}, \text{ as } x\to \infty.$$
Thus, if $c(a,q)>c(b,q)$ then the residue class $a\pmod q$ is guaranteed to win the Mertens product race as soon as $x$ exceeds a certain number that depends only on $a, b$ and $q$. Languasco and Zaccagnini \cite{LZ} computed many of the constants $c(a,q)$ and showed that for example $c(3,4)>c(1,4)$, but $c(2,7)>c(3,7)$ although $2$ is a quadratic residue and $3$ is a quadratic non-residue modulo $7$.  The difference from Chebyshev's bias probably lies in the fact that the product \eqref{Mertens} is heavily affected by the  small primes $p\equiv a \pmod q$ due to the factor $1/p$. Therefore, if an arithmetic progression contains many small primes, then it has a better chance to win in the Mertens product race. 


Concerning the size of the oscillations of the difference \eqref{diff}, Diamond and Pintz \cite{DP} proved that
$$  
\sqrt{x}\left(\prod_{p\leq x}\left(1-\frac1p\right)^{-1}- e^{\gamma}\log x\right)= \Omega_{\pm} \left(\log\log\log x\right).
$$ 
Montgomery \cite{Mont} used probabilistic arguments to conjecture the maximal size of $\pi(x)-\text{Li}(x)$. Following his approach we make the following conjecture

\begin{Con}\label{MaxMin}
As $x\to \infty$ we have 
$$ \limsup_{x\to\infty} \frac{\sqrt{x}}{(\log\log\log x)^2}\left(\prod_{p\leq x}\left(1-\frac1p\right)^{-1}- e^{\gamma}\log x\right)=\frac{e^{\gamma}}{2\pi}, $$
and 
$$ \liminf_{x\to\infty} \frac{\sqrt{x}}{(\log\log\log x)^2}\left(\prod_{p\leq x}\left(1-\frac1p\right)^{-1}- e^{\gamma}\log x\right)=-\frac{e^{\gamma}}{2\pi}.$$
\end{Con}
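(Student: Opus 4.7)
The plan is to adapt the probabilistic framework Montgomery introduced in \cite{Mont} to conjecture the extreme oscillations of $\pi(x)-\textup{Li}(x)$. The first step is an explicit-formula reduction. Writing $\prod_{p\leq x}(1-1/p)^{-1} = e^{\gamma}\log x\cdot\exp(R(x))$, using the identity $\log\prod_{p\leq x}(1-1/p)^{-1}=\sum_{p\leq x}\sum_{k\geq 1}1/(kp^{k})$, and combining Mertens' theorem with partial summation applied to the Riemann--von Mangoldt explicit formula for $\theta(x)$ (available under RH), one should obtain
$$ R(x) = \frac{V(x)}{\sqrt{x}\,\log x} + O\!\left(\frac{1}{\sqrt{x}\,\log^{2}x}\right), $$
where $V(x)$ is a Montgomery-type zero sum whose oscillating part takes the form $V(x) \sim -2\sum_{\gamma>0}\sin(\gamma\log x)/\gamma$ (up to an absolutely convergent remainder and a constant $O(1)$ bias responsible for the Rosser--Schoenfeld phenomenon of Theorem \ref{Bias}). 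Since $R(x)^{2}$ contributes $O((\log\log\log x)^{4}/(x\log^{2}x))$, negligible at the scale of Conjecture \ref{MaxMin}, we arrive at
$$ \sqrt{x}\Bigl(\prod_{p\leq x}(1-1/p)^{-1}-e^{\gamma}\log x\Bigr) = e^{\gamma}V(x) + O(1/\log x). $$
The conjecture thereby reduces to $\limsup V(x)/(\log\log\log x)^{2}=1/(2\pi)$ with the symmetric liminf, which is precisely Montgomery's prediction for the oscillating part of $(\pi(x)-\textup{Li}(x))\log x/\sqrt{x}$.

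Under LI, Kronecker--Weyl equidistribution justifies modeling $V(x)$ by the random series $V_{\textup{rand}} = -2\sum_{\gamma>0}\sin(U_{\gamma})/\gamma$, where the $U_{\gamma}$ are independent uniform variables on $[0,2\pi)$ and $\gamma$ runs over positive imaginary parts of non-trivial zeros of $\zeta$. Following Montgomery, the extremal behaviour of $V_{\textup{rand}}$ is analysed through the cumulant generating function
$$ \log\ex\bigl[e^{tV_{\textup{rand}}}\bigr] = \sum_{\gamma>0}\log I_{0}(2t/\gamma) \sim \frac{t(\log t)^{2}}{2\pi}\qquad(t\to\infty), $$
where the asymptotic combines the zero density $\#\{\gamma\leq T\}\sim T\log T/(2\pi)$ with $\log I_{0}(z)\sim z$ for $z$ large and $\log I_{0}(z)\sim z^{2}/4$ for $z$ small. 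A Chernoff optimization then yields the double-exponential tail $\pr(V_{\textup{rand}}>V)\asymp\exp(-e^{\sqrt{2\pi V}})$. Since $V(x)$ oscillates on the $\log x$ scale, the interval $[2,T]$ supplies of order $\log T$ effectively independent samples of $V$, and a Borel--Cantelli argument produces $\limsup_{x\leq T}V_{\textup{rand}}(x)/(\log\log\log T)^{2}\to 1/(2\pi)$ almost surely. Transferring back to the arithmetic $V(x)$ via the equidistribution mechanism already employed in the proof of Theorem \ref{Bias}, and multiplying by $e^{\gamma}$, produces the conjectured constants $\pm e^{\gamma}/(2\pi)$.

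The main obstacle lies entirely in this probabilistic step. Montgomery's prediction for the extreme oscillations of $(\pi(x)-\textup{Li}(x))\log x/\sqrt{x}$ has itself resisted proof for over forty years: pinning down the constant $1/(2\pi)$ demands a sharp lower bound on $\pr(V_{\textup{rand}}>V)$ to complement the Chernoff upper bound, requiring fine joint control of the extreme behaviour of infinitely many independent cosines; moreover, passing from an almost-sure statement about $V_{\textup{rand}}$ to a pointwise one about $V(x)$ needs far more than the finite-dimensional equidistribution LI supplies, since the extreme-value rate is fed by arbitrarily high-frequency contributions in the $V$-series. For these reasons, Conjecture \ref{MaxMin} is expected to remain at the level of a heuristic prediction, conditional on and essentially equivalent to Montgomery's original conjecture for $\pi(x)-\textup{Li}(x)$.
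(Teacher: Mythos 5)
Your proposal follows essentially the same route as the paper's own heuristic in Section 4: expand $\prod_{p\leq x}(1-1/p)^{-1}-e^{\gamma}\log x = e^{\gamma}E_M(x)/\sqrt{x}+O((\log x)^3/x)$, model the zero sum under RH and LI by the random series with uniform phases, use the Montgomery--Monach tail $\pr(Z>V)=\exp\bigl(-C_0\sqrt{V}e^{\sqrt{2\pi V}}(1+o(1))\bigr)$ (which your Bessel-cumulant computation reproduces, up to the harmless omission of the $\sqrt{V}$ factor), and invoke an ``effectively independent samples''/uniform-convergence heuristic to extract the extreme values $\pm\frac{1}{2\pi}(\log\log\log x)^2$, hence the constants $\pm e^{\gamma}/(2\pi)$. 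Like the paper, you correctly present this as a heuristic rather than a proof (the statement is a conjecture); the only slip is the sign of the oscillating term, which for $E_M$ is $+2\sum_{\gamma>0}\sin(\gamma\log x)/\gamma$ rather than the $\pi(x)-\textup{Li}(x)$ sign you wrote, but by symmetry this does not affect the conjectured constants.
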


\section{An explicit formula for the remainder and the origin of the bias}
Let 
$$
E_M(x):= \sqrt{x}(\log x)\left(\log \left(\prod_{p\leq x}\left(1-\frac1p\right)^{-1}\right) -\log\log x -\gamma\right).
$$
Then, observe that 

\begin{equation}\label{equiv}
\prod_{p\leq x}\left(1-\frac{1}{p}\right)^{-1}>e^{\gamma}\log x \ \text{ if and only if } \ E_M(x)>0.
\end{equation}

The key ingredient in the proofs of Theorems  \ref{Positive} and  \ref{Bias} is the following unconditional explicit formula for $E_M(x)$ is terms of the non-trivial zeros of $\zeta(s)$.
\begin{pro}\label{MainExplicit}
For any real numbers $x, T \geq 5$  we have
$$
E_M(x)=1
+ \sum_{|\im(\rho)|<T} \frac{x^{\rho-1/2}}{\rho-1}+ O\left(\frac{1}{\log x}\sum_{|\im(\rho)|<T}\frac{x^{\re(\rho)-1/2}}{\im(\rho)^2}+\frac{\sqrt{x}(\log (xT))^2}{T}+\frac{1}{\log x}\right),
$$
where $\rho$ runs over the non-trivial zeros of $\zeta(s).$
\end{pro}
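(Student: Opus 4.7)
The plan is to express $E_M(x)$ as the sum of three pieces: a systematic ``$+1$'' coming from prime squares $p^2$ with $\sqrt{x}<p\le x$; an oscillatory sum over zeros obtained by substituting the truncated Riemann--von Mangoldt explicit formula into a partial-summation identity for $\sum_{n\le x}\Lambda(n)/(n\log n)$; and the stated error terms. Expanding $\log(1-1/p)^{-1}=\sum_{k\ge 1}\frac{1}{kp^k}$ and swapping summations,
$$\log\prod_{p\le x}\Big(1-\frac{1}{p}\Big)^{-1}=\sum_{n\le x}\frac{\Lambda(n)}{n\log n}+R_1(x),\qquad R_1(x):=\sum_{k\ge 2}\sum_{x^{1/k}<p\le x}\frac{1}{kp^k}.$$
Only $k=2$ matters to leading order: partial summation from the prime number theorem shows $\sum_{\sqrt x<p\le x}\frac{1}{2p^2}=\frac{1}{\sqrt x\log x}+O\bigl(\frac{1}{\sqrt x(\log x)^2}\bigr)$, while $k\ge 3$ contributes $O(x^{-2/3})$. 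After multiplication by $\sqrt x\log x$ this yields the ``$+1$'', with the residual error folded into the $O(1/\log x)$ term.

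Next, set $h(t):=\frac{1}{t^2\log t}+\frac{1}{t^2(\log t)^2}$ and $E_\psi(t):=\psi(t)-t$. Abel summation of $\sum_{n\le x}\Lambda(n)(n\log n)^{-1}$ against $\psi$, combined with Mertens' third theorem to pin down the constant of integration, gives
$$\sum_{n\le x}\frac{\Lambda(n)}{n\log n}-\log\log x-\gamma=\frac{E_\psi(x)}{x\log x}-\int_x^\infty E_\psi(t)\,h(t)\,dt.$$
I then substitute the truncated explicit formula $E_\psi(t)=-\sum_{|\im\rho|<T}\frac{t^\rho}{\rho}+O\bigl(\frac{t(\log tT)^2}{T}+\log^2 t\bigr)$. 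The boundary piece $E_\psi(x)/(x\log x)$ contributes $-\sum_{|\im\rho|<T}\frac{x^{\rho-1}}{\rho\log x}$, and two iterated integrations by parts give, for each zero $\rho$ with $\re\rho<1$,
$$\int_x^\infty t^\rho\,h(t)\,dt=-\frac{x^{\rho-1}}{(\rho-1)\log x}+O\!\left(\frac{x^{\re\rho-1}}{|\rho-1|^2(\log x)^2}\right).$$
Summing over zeros and using the algebraic identity $\frac{1}{\rho}+\frac{1}{\rho(\rho-1)}=\frac{1}{\rho-1}$ to merge the boundary and integral contributions collapses both into the single sum $\sum_{|\im\rho|<T}\frac{x^{\rho-1}}{(\rho-1)\log x}$ (with signs tracked carefully). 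Rescaling by $\sqrt x\log x$ produces the main sum; the subleading IBP term contributes precisely $\frac{1}{\log x}\sum_{|\im\rho|<T}\frac{x^{\re\rho-1/2}}{\gamma^2}$, and the truncated-formula error integrates to $O\bigl(\sqrt x(\log xT)^2/T\bigr)$.

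The main technical obstacle is that the $O(t(\log tT)^2/T)$ error of the truncated explicit formula is not integrable against $h(t)$ over $[x,\infty)$: substituting it naively would give a divergent integral. The remedy is to split $\int_x^\infty$ at some intermediate value such as $t=T^2$; on $[x,T^2]$ one substitutes the truncated formula directly, while on $[T^2,\infty)$ one invokes the unconditional PNT bound $|E_\psi(t)|\ll t\exp(-c\sqrt{\log t})$ (or the stronger RH bound $\ll\sqrt t(\log t)^2$ when available), which makes the outer-tail contribution comfortably within the stated error. This splitting, together with careful bookkeeping of the subleading IBP terms described above, is where essentially all of the technical work lives.
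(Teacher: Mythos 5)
Your opening steps are fine and essentially match the paper: the ``$+1$'' from the prime squares, and the partial-summation identity $\sum_{n\le x}\Lambda(n)/(n\log n)-\log\log x-\gamma=E_\psi(x)/(x\log x)-\int_x^\infty E_\psi(t)h(t)\,dt$ with the constant pinned by Mertens, are correct, and merging $1/\rho$ and $1/(\rho(\rho-1))$ into $1/(\rho-1)$ is indeed how the main term should appear. The genuine gap is in your treatment of the tail of $\int_x^\infty E_\psi(t)h(t)\,dt$. Splitting at $T^2$ and using only the unconditional bound $E_\psi(t)\ll t\exp(-c\sqrt{\log t})$ beyond the cut contributes roughly $\exp(-c'\sqrt{\log T})$ before rescaling, i.e. $\sqrt{x}\,(\log x)\exp(-c'\sqrt{\log T})$ in $E_M(x)$; for $T\asymp x$ (exactly the choice the paper needs later) the stated error is $O(1/\log x)$ when the zeros lie on the critical line, and since the proposition is unconditional with an error expressed in terms of the actual zeros, a proof must deliver that bound in that case --- your tail term is astronomically larger, and the parenthetical ``use the RH bound when available'' is not available here. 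Moving the cut $t_0$ higher cannot fix this: making the PNT tail admissible forces $\log t_0\gg(\log T)^2$, but then the truncated-formula error integrated over $[x,t_0]$ is $\gg(\log t_0)^2/T$ (unscaled), far exceeding the allowed $(\log xT)^2/(T\log x)$ once $T$ is of polynomial size in $x$; even your cut $t_0=T^2$ already overshoots that term by a factor about $\log x\log\log T$, and truncating the per-zero integrals at $T^2$ discards pieces of size about $T^{2(\re(\rho)-1)}/(\im(\rho)^2\log T)$ which are not majorized by the allowed $x^{\re(\rho)-1}/(\im(\rho)^2(\log x)^2)$. No single cut satisfies all three constraints, so your argument proves only a weaker statement than Proposition \ref{MainExplicit}.

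The paper sidesteps this entirely by never forming an infinite integral in $t$: it proves a truncated Perron formula for $\sum_{n\le x}\Lambda(n)n^{-\alpha}$ with $\alpha>1$ (Lemma \ref{Perron}), integrates it over $\alpha\in(\sigma,\infty)$, and lets $\sigma\to1^+$, recovering $\log\log x+\gamma$ from $\lim_{\sigma\to1^+}\big(\log\zeta(\sigma)+\int_\sigma^\infty\frac{x^{1-\alpha}}{1-\alpha}d\alpha\big)$; there every error term decays like $x^{1-\alpha}$ in the integration variable and the zero denominators satisfy $|\rho-\alpha|\ge|\im(\rho)|$, so the per-zero error $x^{\re(\rho)-1}/(\im(\rho)^2(\log x)^2)$ falls out with no appeal to the prime number theorem. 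If you want to salvage your route you would need zero information on the whole tail (for instance a $t$-dependent truncation height), not PNT. A secondary point: your claimed estimate $\int_x^\infty t^\rho h(t)\,dt=-\frac{x^{\rho-1}}{(\rho-1)\log x}+O\big(x^{\re(\rho)-1}/(|\rho-1|^2(\log x)^2)\big)$ does not follow from naive iterated integration by parts, whose tail bounds carry a factor $1/(1-\re(\rho))$, only $\ll\log|\im(\rho)|$ unconditionally; you need an oscillation argument (Abel summation against $e^{(\rho-1)u}$ with the decreasing weight $u^{-2}$) to get $1/|\rho-1|$ there --- fixable, but it should be said, and it is in any case minor compared with the tail issue.
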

From this formula one can deduce that the source of the bias is the constant $1$  which comes from the contribution of the squares of primes (see Lemma \ref{Approx1} below). Indeed, if we assume the Riemann hypothesis,  we get the following corollary. 

\begin{cor}\label{ExplicitRH}
Assume the Riemann hypothesis, and let $1/2+i\gamma_n$ runs over the non-trivial zeros of $\zeta(s)$.  Then, for any real numbers $x, T\geq 5$ we have 
\begin{equation}\label{ExplicitFormula}
 E_M(x)=  1 + 2 \re \sum_{0<\gamma_n<T} \frac{x^{i\gamma_n}}{-1/2+i \gamma_n}+ O\left(\frac{\sqrt{x}(\log (xT))^2}{T}+\frac{1}{\log x}\right).
\end{equation}
\end{cor}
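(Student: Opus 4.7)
The plan is to specialize Proposition \ref{MainExplicit} under RH, where every non-trivial zero has the form $\rho = 1/2 + i\gamma_n$ with $\gamma_n \in \mathbb{R}$. Substituting, the main term in the proposition becomes
$$
\sum_{|\gamma_n| < T} \frac{x^{i\gamma_n}}{-1/2 + i\gamma_n},
$$
since $x^{\rho - 1/2} = x^{i\gamma_n}$ and $\rho - 1 = -1/2 + i\gamma_n$.

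Next, I would exploit the symmetry $\rho \mapsto \bar\rho$: the non-trivial zeros come in conjugate pairs $1/2 \pm i\gamma_n$, so pairing $\gamma_n$ with $-\gamma_n$ gives
$$
\sum_{|\gamma_n| < T} \frac{x^{i\gamma_n}}{-1/2 + i\gamma_n} = \sum_{0 < \gamma_n < T}\left( \frac{x^{i\gamma_n}}{-1/2 + i\gamma_n} + \frac{x^{-i\gamma_n}}{-1/2 - i\gamma_n}\right) = 2\re \sum_{0 < \gamma_n < T} \frac{x^{i\gamma_n}}{-1/2 + i\gamma_n},
$$
which is exactly the main term in \eqref{ExplicitFormula}. (A small bookkeeping detail is to note that $\zeta$ has no zero with $\gamma_n = 0$, so no ambiguity arises in the pairing.)

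The final step is to simplify the first error term in Proposition \ref{MainExplicit}. Under RH, $x^{\re(\rho) - 1/2} = 1$, so
$$
\frac{1}{\log x}\sum_{|\gamma_n| < T} \frac{x^{\re(\rho) - 1/2}}{\gamma_n^2} = \frac{1}{\log x}\sum_{|\gamma_n| < T} \frac{1}{\gamma_n^2} \ll \frac{1}{\log x},
$$
using the classical fact that $\sum_{\gamma_n > 0} 1/\gamma_n^2$ converges (which follows, e.g., from the zero-counting estimate $N(T) \ll T \log T$ by partial summation). Absorbing this into the remaining error terms yields the stated formula.

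There is really no obstacle here: the corollary is a direct substitution of the RH conditions into Proposition \ref{MainExplicit} combined with the pairing of conjugate zeros. The only mildly non-trivial point is recognizing that the first error term in the proposition collapses to $O(1/\log x)$ because $\sum 1/\gamma_n^2 < \infty$, rather than growing with $T$.
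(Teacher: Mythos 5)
Your proposal is correct and is essentially the paper's own argument: specialize Proposition \ref{MainExplicit} under RH, pair conjugate zeros to obtain the $2\re$ sum, and absorb the first error term using $\sum_{|\gamma_n|<T}1/\gamma_n^2\ll 1$ (the Riemann--von Mangoldt bound), exactly as in the paper's proof of the corollary.
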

\begin{proof} This follows from Proposition \ref{MainExplicit}  along with the fact that 
\begin{equation}\label{BoundZeros}
\sum_{|\gamma_n|<T}\frac{1}{\gamma_n^2} \ll 1
\end{equation}
by the Riemann-von Mangoldt formula.

\end{proof}

In order to prove Proposition \ref{MainExplicit} we first need the following lemmas. 

\begin{lem}\label{Approx1}
For any real number $x\geq 2$ we have 
$$\log \left(\prod_{p\leq x}\left(1-\frac1p\right)^{-1}\right)=\sum_{n\leq x}\frac{\Lambda(n)}{n\log n} +\frac{1}{\sqrt{x}\log x}+O\left(\frac{1}{\sqrt{x}(\log x)^2}\right).$$
\end{lem}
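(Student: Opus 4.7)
The plan is to compare the two sides by expanding the logarithm. Using $-\log(1-1/p)=\sum_{k\geq 1}1/(kp^k)$ together with the identity $\Lambda(n)/(n\log n)=1/(kp^k)$ when $n=p^k$ (and $0$ otherwise), the two quantities differ by
\begin{equation*}
D(x):=\sum_{\substack{p\leq x,\, k\geq 1\\ p^k>x}}\frac{1}{kp^k},
\end{equation*}
so it suffices to show $D(x)=1/(\sqrt{x}\log x)+O(1/(\sqrt{x}(\log x)^2))$.

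I would split the range into $p>\sqrt{x}$ (where $p^k>x$ forces $k\geq 2$) and $p\leq \sqrt{x}$ (which forces $k\geq 3$). For the small-$p$ part, grouping by $k$ and summing the geometric series in $p$ yields
\begin{equation*}
\sum_{k\geq 3}\frac{1}{k}\sum_{\substack{p\leq \sqrt{x}\\ p>x^{1/k}}}\frac{1}{p^k}\ll \sum_{k\geq 3}\frac{1}{k(k-1)}\, x^{-(k-1)/k}\ll x^{-2/3},
\end{equation*}
which is well within the allowed error. For the large-$p$ part, the terms with $k\geq 3$ contribute at most $\sum_{n>\sqrt{x}}n^{-3}\ll 1/x$, leaving the dominant piece $\tfrac12\sum_{\sqrt{x}<p\leq x}1/p^2$.

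The core of the proof is then the asymptotic evaluation
\begin{equation*}
\frac12\sum_{\sqrt{x}<p\leq x}\frac{1}{p^2}=\frac{1}{\sqrt{x}\log x}+O\!\left(\frac{1}{\sqrt{x}(\log x)^2}\right),
\end{equation*}
which I would obtain by partial summation, writing the left side as $\pi(x)/x^2-\pi(\sqrt{x})/x+2\int_{\sqrt{x}}^x \pi(t)/t^3\, dt$ and substituting $\pi(t)=t/\log t+O(t/(\log t)^2)$ (this is the only arithmetic input needed). The main integral $\int_{\sqrt x}^x dt/(t^2\log t)$ can be evaluated by a single integration by parts with $u=1/\log t$, $dv=dt/t^2$, producing a boundary contribution $2/(\sqrt{x}\log x)$ up to an acceptable error.

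The main obstacle, though routine, is bookkeeping of constants: the boundary term $\pi(\sqrt{x})/x\sim 2/(\sqrt{x}\log x)$ partially cancels a contribution of $4/(\sqrt{x}\log x)$ coming from the integral, leaving exactly $2/(\sqrt{x}\log x)$ which, once halved, gives the claimed constant $1$ in front of $1/(\sqrt{x}\log x)$. Since only the classical prime number theorem in the weak form $\pi(t)=t/\log t+O(t/(\log t)^2)$ is used, the lemma is unconditional as stated, and combining $D_1$, $D_2$, and the error terms above finishes the proof.
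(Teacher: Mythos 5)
Your proposal is correct and follows essentially the same route as the paper: expand the logarithm, isolate $\sum_{n\leq x}\Lambda(n)/(n\log n)$, discard the $k\geq 3$ and small-prime contributions as $O(x^{-2/3})$, and extract the main term from $\tfrac12\sum_{\sqrt{x}<p\leq x}p^{-2}$ via the prime number theorem. The only (cosmetic) difference is that you evaluate this last sum by Abel summation with $\pi(t)=t/\log t+O\bigl(t/(\log t)^2\bigr)$ and an integration by parts, while the paper writes it as a Stieltjes integral, invokes the PNT error term, and evaluates $\int_{\sqrt{x}}^{x}dt/(t^{2}\log t)$ by a change of variables; both yield the boundary/main-term bookkeeping you describe and the constant $1/(\sqrt{x}\log x)$.
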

\begin{proof}
We have 
\begin{equation}\label{Approx2}
\begin{aligned}\log \left(\prod_{p\leq x}\left(1-\frac1p\right)^{-1}\right) = \sum_{p\leq x} \sum_{k=1}^{\infty} \frac{1}{kp^k}&= \sum_{n\leq x}\frac{\Lambda(n)}{n\log n} +\sum_{\substack{ k\geq 2\\ x^{1/k}<p\leq x}}\frac{1}{kp^k}\\
&= \sum_{n\leq x}\frac{\Lambda(n)}{n\log n}  +\sum_{\sqrt{x}<p\leq x} \frac{1}{2p^2}+ O\left(x^{-2/3}\right).
\end{aligned}
\end{equation}
Furthermore, by the prime number theorem, we have 
$$ \sum_{\sqrt{x}<p\leq x}\frac{1}{p^2}=\int_{\sqrt{x}}^x\frac{d\pi(t)}{t^2}= \int_{\sqrt{x}}^x\frac{dt}{t^2\log t} +O\left(x^{-1/2}e^{\sqrt{\log x}}\right).$$
We use the change of variable $u=\log t-(\log x)/2$ to deduce that
\begin{align*}
\int_{\sqrt{x}}^x\frac{dt}{t^2\log t}&= \frac{1}{\sqrt{x}}\int_0^{(\log x)/2}\frac{2e^{-u}}{2u+\log x}du\\
&= \frac{2}{\sqrt{x}\log x}\int_0^{(\log x)/2}e^{-u}du + O\left(\frac{1}{\sqrt{x}(\log x)^2}\right)\\
&= \frac{2}{\sqrt{x}\log x}+ O\left(\frac{1}{\sqrt{x}(\log x)^2}\right).
\end{align*}
Inserting this estimate in \eqref{Approx2} completes the proof.
\end{proof}

\begin{lem}\label{Perron}
For any $\alpha>1$ and $x, T\geq 5$ we have 
\begin{align*}
\sum_{n\leq x} &\frac{\Lambda(n)}{n^{\alpha}} = -\frac{\zeta'}{\zeta}(\alpha)+\frac{x^{1-\alpha}}{1-\alpha} -\sum_{|\im(\rho)|\leq T} \frac{x^{\rho-\alpha}}{\rho-\alpha}\\
&+ O\left(x^{-\alpha}\log x + \frac{x^{1-\alpha}}{T}\left(4^{\alpha}+(\log x)^2+\frac{(\log T)^2}{\log x} \right)+\frac{1}{T}\sum_{n=1}^{\infty}\frac{\Lambda(n)}{n^{\alpha+1/\log x}} \right).\\
\end{align*}
\end{lem}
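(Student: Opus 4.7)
The plan is to apply the truncated Perron formula to the Dirichlet series $-\zeta'/\zeta(\alpha+s)=\sum_{n=1}^{\infty}\Lambda(n)n^{-\alpha-s}$, which converges absolutely for $\re(s)>1-\alpha$, and then shift the contour to the left through the critical strip to collect the residues of the integrand at $s=0$, $s=1-\alpha$, and at $s=\rho-\alpha$ for each non-trivial zero $\rho$ with $|\im(\rho)|\le T$. First I would take $c=1/\log x$ and invoke the effective Perron formula (see, e.g., Theorem~5.2 of Titchmarsh's \emph{Theory of the Riemann Zeta-function}) to write
\[
\sum_{n\le x}\frac{\Lambda(n)}{n^{\alpha}}=\frac{1}{2\pi i}\int_{c-iT}^{c+iT}\Bigl(-\frac{\zeta'}{\zeta}(\alpha+s)\Bigr)\frac{x^{s}}{s}\,ds+\mathcal{E}_P,
\]
where $\mathcal{E}_P$ consists of a boundary correction for prime powers $n\in[x/2,2x]$ with $n\neq x$ (which, using $\Lambda(n)\ll\log x$ and the spacing of prime powers, contributes $\ll x^{-\alpha}\log x$), together with the standard truncated-Perron tail $\ll (x^c/T)\sum_n \Lambda(n)/(n^{\alpha+c}(1+T|\log(x/n)|))$. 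Since $x^c=e$, after peeling off terms with $n$ close to $x$ this tail is absorbed into the $T^{-1}\sum_n \Lambda(n)n^{-\alpha-1/\log x}$ piece of the claimed error.

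Next I would shift the line of integration from $\re(s)=c$ to $\re(s)=-U$ for a fixed $U>\alpha+1$ chosen to avoid the trivial-zero abscissae $s=-2k-\alpha$. Cauchy's theorem yields the residues $-\zeta'/\zeta(\alpha)$ at $s=0$, $x^{1-\alpha}/(1-\alpha)$ at $s=1-\alpha$ (from the simple pole of $\zeta$ at $1$), and $-x^{\rho-\alpha}/(\rho-\alpha)$ at each $s=\rho-\alpha$ with $|\im(\rho)|\le T$; the trivial-zero contributions sum geometrically to $O(x^{-\alpha})$. It then remains to estimate the contour integrals on the top and bottom segments at heights $\pm T$ and on the left vertical segment $\re(s)=-U$. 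For the horizontal segments I would perturb $T$ by $O(1)$ so that $T$ lies at distance $\gg 1/\log T$ from every zero ordinate and apply the classical bound $|\zeta'/\zeta(\sigma'+iT)|\ll(\log T)^2$, uniformly for $-1\le \sigma'\le 2$, obtained from the partial-fraction identity $\zeta'/\zeta(s)=\sum_{|\gamma-t|\le 1}(s-\rho)^{-1}+O(\log|t|)$. Integrating $|x^s/s|\ll x^{\sigma}/(T+|\sigma|)$ across $-U\le\sigma\le c$ then produces the $\tfrac{x^{1-\alpha}}{T}((\log x)^2+(\log T)^2/\log x)$ contribution. On the left-hand segment one uses the functional equation to bound $|\zeta'/\zeta(\alpha-U+it)|\ll\log(2+|t|)$; combined with $|x^s|=x^{-U}$ and tracking the $\alpha$-dependence of the constants appearing in the reflection formula, this yields the $4^\alpha$ factor.

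The only genuinely delicate step is obtaining the $(\log T)^2$ bound for $\zeta'/\zeta$ on the horizontal lines uniformly in $\sigma'\in[-1,2]$ with a careful choice of $T$: one needs to move $T$ by $O(1)$ to avoid falling too close to an ordinate of a zero, without degrading the overall shape of the estimate. Everything else is standard contour-shifting bookkeeping, and assembling the three pieces gives the formula of Lemma~\ref{Perron}.
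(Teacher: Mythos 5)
Your overall skeleton (truncated Perron at $c=1/\log x$, shift to the far left, collect residues at $0$, $1-\alpha$, the nontrivial zeros and the trivial zeros, perturb the height to stay $\gg 1/\log T$ from zero ordinates) matches the paper, and your treatment of the Perron truncation error is essentially the paper's. The genuine gap is in the horizontal segments. You propose to bound $\zeta'/\zeta$ there by $(\log T)^2$ ``uniformly'' and integrate $x^{\sigma}/(T+|\sigma|)$ over $-U\le\sigma\le c$, claiming this yields $\frac{x^{1-\alpha}}{T}\bigl((\log x)^2+(\log T)^2/\log x\bigr)$. It does not: integrating $x^{\sigma}$ up to $\sigma=c$ gives only $x^{c}/\log x\ll 1/\log x$, so your bound is $\frac{(\log T)^2}{T\log x}$ \emph{without} the factor $x^{1-\alpha}$ (and, where $\re(\alpha+s)>2$, where the partial-fraction bound no longer applies, the trivial bound $\zeta'/\zeta\ll 1$ gives an extra $\frac{1}{T\log x}$). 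These quantities are not majorized by the stated error term: take $\alpha=3/2$ and $T$ enormous compared to $x$, or $\alpha$ large with $x$ fixed, and none of $x^{-\alpha}\log x$, $\frac{x^{1-\alpha}}{T}(4^{\alpha}+(\log x)^2+(\log T)^2/\log x)$, $\frac1T\sum_n\Lambda(n)n^{-\alpha-1/\log x}$ dominates them. The decay in $\alpha$ is not a cosmetic issue: the lemma is later integrated over $\alpha\in(\sigma,\infty)$ to prove Proposition \ref{MainExplicit}, and an error bound with no $\alpha$-decay would make that integral diverge. The paper avoids this by splitting the horizontal integral at $\re(\alpha+s)=1+c$: to the left of that point it uses the partial-fraction bound $(\log T)^2$ (and there $\int x^{\sigma}d\sigma\ll x^{1-\alpha}/\log x$, which is where the factor $x^{1-\alpha}$ comes from), while to the right it keeps the Dirichlet-series bound $|\zeta'/\zeta(\alpha+s)|\le\sum_n\Lambda(n)n^{-\alpha-\sigma}$ and integrates term by term in $n$, splitting into $n\le x/2$, $x/2<n<2x$, $n\ge 2x$; this computation is precisely the source of the $4^{\alpha}$, the $x^{1-\alpha}\log x$, and the $\frac1T\sum_n\Lambda(n)n^{-\alpha-1/\log x}$ pieces of the error.

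Relatedly, your attribution of the $4^{\alpha}$ factor to constants in the functional equation on the line $\re(s)=-U$ is a misdiagnosis: on that line the integral is $O\bigl(x^{-U}(\log T)^2\bigr)$ with no $\alpha$-dependence worth tracking, and the paper simply lets $U\to\infty$ so that this contribution disappears (this also sidesteps the issue that with your fixed $U>\alpha+1$ the leftover $x^{-U}(\log T)^2$ is not obviously absorbed into the stated error uniformly in $T$). So the contour-shifting framework is right, but the estimate you give on the horizontal segments is too crude to prove the lemma as stated, and the step that actually produces the shape of the error term is missing.
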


\begin{proof}  Since there are $O(\log T)$ non-trivial zeros of $\zeta(s)$ with ordinate in $[T, T+1]$, then there exists a point $T_0\in [T, T+1]$ which is at a distance $\gg 1/\log T$ from the nearest zero of $\zeta(s)$.  Let $c=1/\log x$ and consider the integral
\begin{equation}\label{Mellin}
\frac{1}{2\pi i}\int_{c-iT_0}^{c+iT_0}\left(-\frac{\zeta'}{\zeta}(\alpha+s)\right)\frac{x^s}{s}ds.
\end{equation}
First,  by Perron's formula the integral above equals 
$$\sum_{n\leq x} \frac{\Lambda(n)}{n^{\alpha}}+ O\left(\sum_{n=1}^{\infty}\frac{\Lambda(n)}{n^{\alpha+c}}\min\left(1, \frac{1}{T_0|\log(x/n)|}\right)\right).$$
To bound the error term of this last estimate, we first handle the terms  $n\leq x/2$ and $n\geq 2x$. These satisfy $|\log(x/n)|\geq \log 2$, and hence their contribution is 
$$\ll \frac{1}{T} \sum_{n=1}^{\infty}\frac{\Lambda(n)}{n^{\alpha+c}}.$$
Now for $x/2<n<2x$, we let $r=n-x$. The terms with $|r|\leq 1$ contribute 
$\ll x^{-\alpha} \log x$. Furthermore, if $|r|\geq 1$ we use the bound $|\log(x/n)|\gg |r|/x$. Hence,  the contribution of these terms is 
$$\ll \frac{x^{1-\alpha}\log x}{T}\sum_{1\leq |r|\leq x}\frac{1}{|r|}\ll \frac{x^{1-\alpha}(\log x)^2}{T}.$$
Therefore, we deduce that the integral \eqref{Mellin} equals
\begin{equation}\label{FirstEvaluation}
 \sum_{n\leq x} \frac{\Lambda(n)}{n^{\alpha}} +O\left(x^{-\alpha}\log x+ \frac{x^{1-\alpha}(\log x)^2}{T}+\frac{1}{T}\sum_{n=1}^{\infty}\frac{\Lambda(n)}{n^{\alpha+1/\log x}} \right).
 \end{equation}
We now move the contour of integration in \eqref{Mellin} to the line $\text{Re}(s)=-U$ where $U>0$ is large and $U\neq 2n+\alpha$ for any $n\in \mathbb{N}$. We encounter simple poles at $0, 1-\alpha$ and $z-\alpha$ for every zero $z$ of $\zeta(s)$ with $|\im(z)|\leq T_0$ and $\text{Re}(z)>-U$. Evaluating the residues there, we find that our integral equals
\begin{equation}\label{SecondEvaluation}
 -\frac{\zeta'}{\zeta}(\alpha)+\frac{x^{1-\alpha}}{1-\alpha} -\sum_{|\im(\rho)|\leq T_0} \frac{x^{\rho-\alpha}}{\rho-\alpha} +\sum_{n\leq (U-\alpha)/2}\frac{x^{-2n-\alpha}}{2n+\alpha} +I ,
 \end{equation}
where 
$$
I
=  \frac{1}{2\pi i}\left(\int_{c-iT_0}^{-U-iT_0}+\int_{-U-iT_0}^{-U+iT_0}+\int_{-U+iT_0}^{c+iT_0}\right) \left(-\frac{\zeta'}{\zeta}(\alpha+s)\right)\frac{x^s}{s}ds.
$$
To bound the first and third integrals, we first note that for all $\re(s)=\sigma\geq 1-\alpha+c$ we have 
$$ \left|-\frac{\zeta'}{\zeta}(\alpha+s)\right| \leq \sum_{n=1}^{\infty} \frac{\Lambda(n)}{n^{\alpha+\sigma}}.$$
On the other hand, if  $\re(s)=\sigma\leq 1-\alpha+c$ we use the following estimate for $\zeta'/\zeta(s)$ (see for example equation (4) of  Chapter 15 of  Davenport \cite{Da})
\begin{equation}\label{Dav}
\frac{\zeta'}{\zeta}(\sigma+it)=\sum_{|t-\im(\rho)|\leq 1}\frac{1}{\sigma+it-\rho}+O(\log (|t|+2)).
\end{equation}
Then, using our assumption on $T_0$ we obtain
\begin{align*}
\frac{1}{2\pi i}\int_{c-iT_0}^{-U-iT_0} \left(-\frac{\zeta'}{\zeta}(\alpha+s)\right)\frac{x^s}{s}ds
&\ll  \frac{(\log T)^2}{T}\int_{-U}^{1-\alpha+c}x^{\sigma}d\sigma +\frac{1}{T} \sum_{n=1}^{\infty} \frac{\Lambda (n)}{n^{\alpha}} \int_{1-\alpha+c}^{c} \left(\frac{x}{n}\right)^{\sigma}d\sigma\\
&\ll \frac{(\log T)^2x^{1-\alpha}}{T\log x}+\frac{1}{T} \sum_{n=1}^{\infty} \frac{\Lambda (n)}{n^{\alpha}} \int_{1-\alpha+c}^{c} \left(\frac{x}{n}\right)^{\sigma}d\sigma.
\end{align*}
To bound the second term in the right hand side of this estimate, we split the sum according to $n\leq x/2$, $x/2<n<2x$, and $n\geq 2x$. For the first and third terms, we use that
$$ \int_{1-\alpha+c}^{c} \left(\frac{x}{n}\right)^{\sigma}d\sigma \ll \frac{(x/n)^c+(x/n)^{1-\alpha+c}}{|\log(x/n)|}\ll \frac{1}{n^c}+ \frac{x^{1-\alpha}}{n^{1-\alpha+c}},$$
while for the middle terms, we simply bound the integrand trivially, to obtain
$$  \int_{1-\alpha+c}^{c} \left(\frac{x}{n}\right)^{\sigma}d\sigma\ll (\alpha-1) 2^{\alpha}.$$
Hence, we derive 
\begin{align*}
 \sum_{n=1}^{\infty}\frac{\Lambda (n)}{n^{\alpha}} \int_{1-\alpha+c}^{c} \left(\frac{x}{n}\right)^{\sigma}d\sigma
 &\ll (\alpha-1) 2^{\alpha}\sum_{x/2<n}\frac{\Lambda(n)}{n^{\alpha}}+x^{1-\alpha}\sum_{n=1}^{\infty}\frac{\Lambda(n)}{n^{1+c}} 
 + \sum_{n=1}^{\infty} \frac{\Lambda(n)}{n^{\alpha+c}}\\
 &\ll  x^{1-\alpha}(4^{\alpha} +\log x)+ \sum_{n=1}^{\infty} \frac{\Lambda(n)}{n^{\alpha+c}},
\end{align*}
by the prime number theorem.
Therefore, we obtain
$$ \frac{1}{2\pi i}\int_{c-iT_0}^{-U-iT_0} \left(-\frac{\zeta'}{\zeta}(\alpha+s)\right)\frac{x^s}{s}ds 
\ll \frac{x^{1-\alpha}}{T} \left(4^{\alpha}+\log x+ \frac{(\log T)^2}{\log x}\right) + \frac{1}{T} \sum_{n=1}^{\infty} \frac{\Lambda (n)}{n^{\alpha+1/\log x}}.
$$
A similar bound holds for $\frac{1}{2\pi i}\int_{-U+iT_0}^{c+iT_0} -\zeta'/\zeta(\alpha+s)\frac{x^s}{s}ds$. Moreover, by \eqref{Dav} we obtain
$$ \frac{1}{2\pi i}\int_{-U-iT_0}^{-U+iT_0} \left(-\frac{\zeta'}{\zeta}(\alpha+s)\right)\frac{x^s}{s}ds\ll \frac{(\log T)^2}{x^U}.
$$
Combining these estimates and letting $U\to \infty$, we deduce that
\begin{equation}\label{LeftContour}
I\ll  \frac{x^{1-\alpha}}{T}\left(4^{\alpha}+\log x+ \frac{(\log T)^2}{\log x}\right) + \frac{1}{T} \sum_{n=1}^{\infty} \frac{\Lambda (n)}{n^{\alpha+1/\log x}}.
\end{equation}
Furthermore, note that
$$ 
\sum_{n\leq (U-\alpha)/2}\frac{x^{-2n-\alpha}}{2n+\alpha}\leq \sum_{n=1}^{\infty}\frac{x^{-2n-\alpha}}{2n+\alpha}\ll x^{-2-\alpha},
$$
and 
$$
\sum_{T\leq |\im(\rho)|\leq T_0} \frac{x^{\rho-\alpha}}{\rho-\alpha}\ll \frac{x^{1-\alpha}\log T}{T}.
$$
Inserting these two estimates in \eqref{SecondEvaluation} and using  \eqref{FirstEvaluation}  and \eqref{LeftContour} completes the proof.
\end{proof}

\begin{lem}\label{Gamma} For any $x\geq 2$ we have 
$$
\lim_{\sigma\to 1^{+}}\left(\log\zeta(\sigma)+\int_{\sigma}^{\infty} \frac{x^{1-\alpha}}{1-\alpha}d\alpha\right)=\log\log x+\gamma.
$$
\end{lem}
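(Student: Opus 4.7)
The plan is to reduce both terms on the left to explicit special functions whose singular behaviour as $\sigma\to 1^+$ can be matched termwise. The singular piece of $\log\zeta(\sigma)$ is easy; the real work is in the integral.

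First I would make the substitution $u=(\alpha-1)\log x$ in the integral. Since $1-\alpha=-u/\log x$ and $x^{1-\alpha}=e^{-u}$, a short computation gives
$$
\int_{\sigma}^{\infty} \frac{x^{1-\alpha}}{1-\alpha}\,d\alpha \;=\; -\int_{(\sigma-1)\log x}^{\infty}\frac{e^{-u}}{u}\,du \;=\; -E_{1}\bigl((\sigma-1)\log x\bigr),
$$
where $E_{1}$ denotes the exponential integral. This is the substantive step: it replaces the $\alpha$-integral by a well-studied function whose behaviour at $0$ encodes exactly the constants we need.

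Next I would invoke the standard expansion $E_{1}(y)=-\gamma-\log y+O(y)$ as $y\to 0^{+}$ to get
$$
-E_{1}\bigl((\sigma-1)\log x\bigr) \;=\; \gamma+\log(\sigma-1)+\log\log x + O\bigl((\sigma-1)\log x\bigr).
$$
On the zeta side, the Laurent expansion $\zeta(\sigma)=(\sigma-1)^{-1}+\gamma+O(\sigma-1)$ yields
$$
\log\zeta(\sigma) \;=\; -\log(\sigma-1)+O(\sigma-1)
$$
as $\sigma\to 1^{+}$.

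Adding the two expansions, the divergent terms $\pm\log(\sigma-1)$ cancel exactly, leaving
$$
\log\zeta(\sigma)+\int_{\sigma}^{\infty}\frac{x^{1-\alpha}}{1-\alpha}\,d\alpha \;=\; \gamma+\log\log x+O\bigl((\sigma-1)\log x\bigr),
$$
and letting $\sigma\to 1^{+}$ with $x$ fixed gives the claimed limit. The only point one must be careful about is the sign bookkeeping in the substitution (the integrand is negative for $\alpha>1$, matching the $-E_{1}$ that appears); beyond this the argument is a routine matching of Laurent/asymptotic expansions, so I do not foresee a genuine obstacle.
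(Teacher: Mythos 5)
Your proof is correct and follows essentially the same route as the paper: the identical substitution $u=(\alpha-1)\log x$, the same cancellation of $\log(\sigma-1)$ against $\log\zeta(\sigma)=-\log(\sigma-1)+O(\sigma-1)$, and the constant $\gamma$ extracted from the standard behaviour of the exponential integral near $0$ (the paper simply re-derives that fact by splitting the integral and citing the integral representation of $\gamma$ from Abramowitz--Stegun, rather than quoting $E_1(y)=-\gamma-\log y+O(y)$ directly).
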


\begin{proof}
Let $\sigma>1$ and $y_0=(\sigma-1)\log x$.  Using the change of variables $y=(\alpha-1)\log x$ we obtain
$$ \int_{\sigma}^{\infty} \frac{x^{1-\alpha}}{1-\alpha}d\alpha=-\int_{y_0}^{\infty}\frac{e^{-y}}{y}dy= \log\log x+\log(\sigma-1)+ \int_{y_0}^{1}\frac{1-e^{-y}}{y} dy-\int_1^{\infty}\frac{e^{-y}}{y}dy.$$ 
Moreover, since $\log\zeta(\sigma)=-\log(\sigma-1)+O(\sigma-1),$  we derive
$$\lim_{\sigma\to 1^{+}}\left(\log\zeta(\sigma)+\int_{\sigma}^{\infty} \frac{x^{1-\alpha}}{1-\alpha}d\alpha\right)= \log\log x+ \int_{0}^{1}\frac{1-e^{-y}}{y} dy-\int_1^{\infty}\frac{e^{-y}}{y}dy.$$
Finally, note that (see for example Section 5.1 of Abramowitz-Stegun \cite{AbSt})
$$\int_{0}^{1}\frac{1-e^{-y}}{y} dy-\int_1^{\infty}\frac{e^{-y}}{y}dy=\gamma.$$
\end{proof}

We are now ready to prove the explicit formula for $E_M(x)$. 
\begin{proof}[Proof of Proposition \ref{MainExplicit}]
Let $\sigma> 1$ be fixed. Then by Lemma \ref{Perron} we have 
\begin{equation}\label{integration}
\begin{aligned}
 \sum_{n\leq x}\frac{\Lambda(n)}{n^{\sigma}\log n}&=\int_{\sigma}^{\infty} \sum_{n\leq x}\frac{\Lambda(n)}{n^{\alpha}}d\alpha\\
&= \log\zeta(\sigma)+\int_{\sigma}^{\infty} \frac{x^{1-\alpha}}{1-\alpha}d\alpha - \sum_{|\im(\rho)|\leq T}\int_{\sigma}^{\infty}\frac{x^{\rho-\alpha}}{\rho-\alpha} d\alpha+ E_1,
\end{aligned}
\end{equation}
where 
$$E_1\ll \frac{1}{T}\left(\log x+\frac{(\log T)^2}{(\log x)^2}\right)+ \frac{1}{x}+\frac{1}{T}\sum_{n=1}^{\infty}\frac{\Lambda(n)}{n^{1+1/\log x}\log n}\ll \frac{1}{T}\left(\log x+\frac{(\log T)^2}{(\log x)^2}\right)+ \frac{1}{x}. $$
Taking the limit as $\sigma\to 1^+$ of both sides of \eqref{integration} and using Lemma \ref{Gamma} we deduce that
\begin{equation}\label{IntLambda}
 \sum_{n\leq x}\frac{\Lambda(n)}{n\log n}= \log\log x+\gamma- \sum_{|\im(\rho)|\leq T}x^{\rho}\int_{1}^{\infty}\frac{x^{-\alpha}}{\rho-\alpha}d\alpha +O\left(\frac{\log x}{T} +\frac{(\log T)^2}{T(\log x)^2}+ \frac{1}{x}\right).
\end{equation}
To evaluate the integral in the right hand side of this estimate, we make the change of variable $u=(\alpha-1)\log x$ to obtain
$$ \int_{1}^{\infty}\frac{x^{-\alpha}}{\rho-\alpha}d\alpha= \frac{1}{x}\int_0^{\infty} \frac{e^{-u}}{(\rho-1)\log x-u} du.$$
Note that $|(\rho-1)\log x-u|\geq |\im(\rho)|\log x$ for all $u\in \mathbb{R}$, and hence
$$ \frac{1}{(\rho-1)\log x-u}=\frac{1}{(\rho-1)\log x}+O\left(\frac{u}{(\im(\rho)\log x)^2}\right).$$
Therefore, we obtain 
$$ \int_{1}^{\infty}\frac{x^{-\alpha}}{\rho-\alpha}d\alpha= \frac{1}{x(\log x)(\rho-1)} +O\left(\frac{1}{x(\log x)^2(\im(\rho))^2}\right).$$
Inserting this estimate in \eqref{IntLambda} and appealing to Lemma \ref{Approx1} completes the proof.
\end{proof}

\section{Proof of Theorem \ref{Positive}}

In this section we shall use the explicit formula \eqref{ExplicitFormula} along with the work of Rubinstein and Sarnak \cite{RuSa} to prove that both $\mathcal{M}$ and its complement have positive lower logarithmic densities. 

\begin{proof}[Proof of Theorem \ref{Positive}]
Let $Y$ be large and $x=e^Y$. First, by making the change of variable $y=\log t$ we deduce that 
\begin{equation}\label{change}
\begin{aligned}
\frac{1}{\log x}\int_{t\in \mathcal{M}\cap [2, x]}\frac{dt}{t}&= \frac1Y\text{meas}\left\{ \log 2\leq y\leq Y: e^y\in \mathcal{M}\right\}\\
&= \frac1Y\text{meas}\left\{ \log 2\leq y\leq Y: E_M(e^y)>0\right\}.
\end{aligned}
\end{equation}
By Corollary \ref{ExplicitRH} and equation \eqref{BoundZeros} we have for all $T\geq 5$ and $y\geq 2$
$$E_M(e^y)= 2\sum_{0<\gamma_n<T}\frac{\sin(\gamma_n y)}{\gamma_n}+ O\left(1+\frac{(y+\log T)^2 e^{y/2}}{T}\right).$$
Therefore, we deduce that if $Y$ is large enough, then there exists a suitably large constant $A>0$ such that 
\begin{equation}\label{Order}
 2\left(\sum_{0<\gamma_n<e^Y}\frac{\sin(\gamma_n y)}{\gamma_n}-A\right)< E_M(e^y) < 2\left(\sum_{0<\gamma_n<e^Y}\frac{\sin(\gamma_n y)}{\gamma_n} +A\right), 
\end{equation}
for all $2\leq y\leq Y$. 
  
Based on the approach of Littlewood \cite{Li}, Rubinstein and Sarnak proved in Section 2.2 of \cite{RuSa} that 
for all $\lambda \gg  1$ we have 
\begin{equation}\label{Right}
 \frac1Y\text{meas}\left\{ 2\leq y\leq Y: \sum_{0<\gamma_n< e^Y}\frac{\sin(\gamma_n y)}{\gamma_n}>\lambda\right\} \geq c_1 \exp\big(-\exp(-c_2\lambda)\big), 
\end{equation}
and 
\begin{equation}\label{Left}
 \frac1Y\text{meas}\left\{2\leq y\leq Y: \sum_{0<\gamma_n< e^Y}\frac{\sin(\gamma_n y)}{\gamma_n}<-\lambda\right\} \geq c_1\exp\big(-\exp(-c_2\lambda)\big), 
\end{equation}
for some absolute positive constants $c_1, c_2$, if $Y$ is large enough. Therefore,  combining equations \eqref{change}, \eqref{Order} and \eqref{Right} we obtain 
\begin{align*}
\frac{1}{\log x}\int_{t\in \mathcal{M}\cap [2, x]}\frac{dt}{t}
&\geq \frac1Y\text{meas}\left\{ 2\leq y\leq Y: \sum_{0<\gamma_n< e^Y}\frac{\sin(\gamma_n y)}{\gamma_n}>A \right\} \\
&\geq \frac{c_1}{2}\exp\big(-\exp(-c_2A)\big),
\end{align*}
if $Y$ is large enough.
Thus, we deduce that $\underline{\delta}(\mathcal{M})\geq \frac{c_1}{2} \exp\big(-\exp(-c_2A)\big)>0$. Similarly, by \eqref{Left} we have 
\begin{align*}
\frac{1}{\log x}\int_{t\in \mathcal{M}\cap [2, x]}\frac{dt}{t}
&\leq \frac1Y\text{meas}\left\{ 2\leq y\leq Y: \sum_{0<\gamma_n< e^Y}\frac{\sin(\gamma_n y)}{\gamma_n}>-A \right\} +O\left(\frac{1}{Y}\right)\\
&\leq 1-  \frac{c_1}{2}\exp\big(-\exp(-c_2A)\big).
\end{align*}
Hence, we get $\overline{\delta}(\mathcal{M})\leq 1-\frac{c_1}{2} \exp\big(-\exp(-c_2A)\big) <1,$ as desired.

\end{proof} 

\section{A limiting distribution for $E_M(x)$ and proof of Theorem \ref{Bias}}

Assuming the Riemann hypothesis and using the explicit formula \eqref{ExplicitFormula} we deduce that the quantity $E_M(x)$ has \textit{a logarithmic limiting distribution}. This follows from the fact that $E_M(e^y)$ is   a $B^2$\emph{-almost periodic} function. More precisely, we have 
\begin{pro}\label{LimitDistrib}
Assume RH.  Then there exists a probability measure $\mu_M$ on $\mathbb{R}$ such that
$$ \lim_{x\to\infty} \frac{1}{\log x}\int_2^x f\big(E_M(t)\big)\frac{dt}{t}= \int_{-\infty}^{\infty} f(t)d\mu_M,$$
for all bounded continuous functions on $\mathbb{R}$. 
\end{pro}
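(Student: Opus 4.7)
The plan is to follow the Rubinstein--Sarnak framework \cite{RuSa}: use Corollary \ref{ExplicitRH} to express $E_M(e^y)$ as a $B^2$-almost periodic function of $y$, and then invoke the standard fact that $B^2$-almost periodic functions admit logarithmic limiting distributions. For each $T\geq 5$, define the real trigonometric polynomial
$$ P_T(y) := 1 + 2\re\sum_{0<\gamma_n<T}\frac{e^{i\gamma_n y}}{-1/2+i\gamma_n}. $$
As a finite sum of exponentials with distinct real frequencies, $P_T$ is Bohr-almost periodic; by Weyl equidistribution on the torus generated by the frequencies $\{\gamma_n : 0<\gamma_n<T\}$, the pushforward of normalized Lebesgue measure on $[0,Y]$ under $P_T$ converges weakly as $Y\to\infty$ to an explicit probability measure $\mu_T$ on $\mathbb{R}$.

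The core of the argument is to establish the $B^2$-approximation
$$ \lim_{T\to\infty}\ \limsup_{Y\to\infty}\ \frac{1}{Y}\int_2^Y\bigl|E_M(e^y)-P_T(y)\bigr|^2\,dy=0. $$
Fix $T$ and introduce an auxiliary truncation $V=e^Y$. The triangle inequality reduces this to bounding two pieces. For the first piece $\frac{1}{Y}\int_2^Y|E_M(e^y)-P_V(y)|^2\,dy$, Corollary \ref{ExplicitRH} gives $|E_M(e^y)-P_V(y)|\ll Y^2 e^{-Y/2}+1/y$ uniformly for $y\in[2,Y]$, so this piece is $O(Y^4 e^{-Y}+1/Y)=o(1)$. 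For the second piece $\frac{1}{Y}\int_2^Y|P_V(y)-P_T(y)|^2\,dy$, the integrand is a finite trigonometric polynomial in $y$ with distinct real frequencies $\{\pm\gamma_n:T\leq\gamma_n<V\}$; after handling off-diagonal cross-terms, Parseval's identity for trigonometric polynomials yields
$$ \limsup_{Y\to\infty}\frac{1}{Y}\int_2^Y|P_V(y)-P_T(y)|^2\,dy\ \leq\ \sum_{\gamma_n\geq T}\frac{2}{1/4+\gamma_n^2}, $$
which tends to $0$ as $T\to\infty$ by the Riemann--von Mangoldt bound \eqref{BoundZeros}.

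With the $B^2$-approximation in hand, Chebyshev's inequality transforms $L^2$-closeness of $E_M(e^y)$ and $P_T(y)$ (on $[2,Y]$, as $Y\to\infty$) into closeness of their value-distributions, so the sequence $(\mu_T)$ is Cauchy in the Lévy metric and converges weakly to some probability measure $\mu_M$ on $\mathbb{R}$. A standard $\varepsilon/3$ argument---approximating a bounded continuous $f$ uniformly on a large compact set, controlling tails via Chebyshev, applying the $B^2$-bound---then yields, after the change of variables $y=\log t$,
$$ \frac{1}{\log x}\int_2^x f(E_M(t))\,\frac{dt}{t}=\frac{1}{\log x}\int_{\log 2}^{\log x}f(E_M(e^y))\,dy\ \longrightarrow\ \int_\mathbb{R} f\,d\mu_M. $$

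The main technical obstacle is controlling the off-diagonal contributions in the Parseval computation above when the auxiliary parameter $V$ grows with $Y$; this requires a delicate coupling (the choice $V=e^Y$ is natural since the explicit-formula error demands $V\gtrsim e^{y/2}$) together with the Riemann--von Mangoldt bound on the vertical distribution of zeros. Once the $B^2$-almost periodicity of $y\mapsto E_M(e^y)$ is in place, the rest of the argument is routine Besicovitch theory.
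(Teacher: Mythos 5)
Your proposal is correct and follows essentially the same route as the paper: the paper proves this proposition simply by citing the $B^2$-almost-periodicity framework of Rubinstein--Sarnak \cite{RuSa} and Akbary--Ng--Shahabi \cite{ANS}, and what you have written out (truncation via Corollary \ref{ExplicitRH} with $T=e^Y$, Weyl equidistribution for the finite trigonometric polynomials, the $B^2$-approximation, and the L\'evy-metric limit) is precisely the internal content of that framework. The only step you leave as a remark --- the off-diagonal estimate with $V=e^Y$ growing with $Y$, handled via $\int_2^Y e^{i(\gamma_m-\gamma_n)y}\,dy\ll\min\bigl(Y,|\gamma_m-\gamma_n|^{-1}\bigr)$ and the Riemann--von Mangoldt zero-counting --- is exactly what the cited works supply, so no genuine gap remains.
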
 
\begin{proof}
This follows from the analysis in Rubinstein and Sarnak \cite{RuSa}, and its generalization by Akbary, Ng and Shahabi \cite{ANS}. 
\end{proof}

If in addition to RH we assume LI, then by Theorem 1.9 of Akbary, Ng and Shahabi \cite{ANS} we have the following explicit formula for the Fourier transform of $\mu_M$  
\begin{equation}\label{Fourier}
\widehat{\mu}_M(t)=\int_{-\infty}^{\infty} e^{-it} d\mu_M= e^{-it}\prod_{\gamma_n>0}J_0\left(\frac{2t}{\sqrt{\frac{1}{4}+\gamma_n^2}}\right),
\end{equation}
for all $t\in \mathbb{R}$, 
where $J_0(t)=\sum_{m=0}^{\infty}(-1)^m(t/2)^{2m}/m!^2$ is the Bessel function of order $0$.
We deduce

\begin{pro}\label{SumRand}
Assume RH and LI. Let $X(\gamma_n)$ be a sequence of independent random variables, indexed by the positive imaginary parts of the non-trivial zeros of $\zeta(s)$, and uniformly distributed on the unit circle. Then $\mu_M$ is the distribution of the random variable
$$Z= 1+ 2\re\sum_{\gamma_n>0}\frac{X(\gamma_n)}{\sqrt{\frac{1}{4}+\gamma_n^2}}.$$
\end{pro}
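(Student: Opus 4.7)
The plan is to compute the characteristic function of $Z$ directly and match it against the formula for $\widehat{\mu}_M(t)$ supplied by \eqref{Fourier}; since a probability measure on $\mathbb{R}$ is uniquely determined by its Fourier transform, this identifies $\mu_M$ as the law of $Z$.

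First I would verify that the random series defining $Z$ converges almost surely, so that $Z$ is a bona fide random variable. Each summand $X(\gamma_n)/\sqrt{1/4+\gamma_n^2}$ has mean zero and second moment $1/(1/4+\gamma_n^2)$, and the series $\sum_{\gamma_n>0} 1/\gamma_n^2$ converges by \eqref{BoundZeros}. Hence Kolmogorov's two-series theorem gives almost sure convergence of the real and imaginary parts of the series, so $Z$ is well-defined.

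Next, writing $X(\gamma_n)=e^{i\theta_n}$ with the $\theta_n$ independent and uniform on $[0,2\pi]$, and letting $Z_N := 1 + 2\re\sum_{0<\gamma_n\leq N} X(\gamma_n)/\sqrt{1/4+\gamma_n^2}$ denote the truncated partial sum, independence combined with the classical integral representation
$$J_0(u) = \frac{1}{2\pi}\int_0^{2\pi} e^{iu\cos\theta}\,d\theta$$
and the evenness of $J_0$ yield
$$\ex\bigl[e^{-itZ_N}\bigr] = e^{-it}\prod_{0<\gamma_n\leq N} J_0\!\left(\frac{2t}{\sqrt{1/4+\gamma_n^2}}\right).$$
Since $Z_N\to Z$ almost surely and $|e^{-itZ_N}|=1$, dominated convergence gives $\ex[e^{-itZ}] = \widehat{\mu}_M(t)$. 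The modest technical point, rather than a genuine obstacle, is ensuring convergence of the infinite product on the right hand side; this is immediate from $J_0(u) = 1 - u^2/4 + O(u^4)$ near $0$ together with $\sum_{\gamma_n>0} 1/\gamma_n^2 < \infty$. The conclusion then follows from Fourier uniqueness.
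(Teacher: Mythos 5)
Your proposal is correct and follows essentially the same route as the paper: compute $\ex\left(e^{-itZ}\right)$ using independence and the identity $J_0(u)=\ex\left(e^{-iu\re X}\right)$ for $X$ uniform on the unit circle, match it with the formula \eqref{Fourier} for $\widehat{\mu}_M$, and conclude by Fourier uniqueness. The only difference is that you spell out the almost sure convergence of the series and the truncation/dominated convergence step, which the paper leaves implicit.
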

\begin{proof}  Note that $J_0(t)=\ex\Big(e^{-it \re X}\Big)$ where $X$ is a random variable uniformly distributed on the unit circle. Therefore,  since the $X(\gamma_n)$ are independent we obtain that 
$$ \ex\left(e^{-it Z}\right)=e^{-it}\prod_{\gamma_n>0}\ex\left(\exp\left(-i\frac{2t}{\sqrt{\frac{1}{4}+\gamma_n^2}}\re X(\gamma_n)\right)\right)=\widehat{\mu}_M(t).$$
Since the Fourier transform completely characterizes the distribution, we deduce that $\mu_M$ is the probability distribution of the random variable $Z$. 

\end{proof}

\begin{proof}[Proof of Theorem \ref{Bias}]
Since $Z$ is the sum of continuous random variables, then by Proposition \ref{SumRand} the probability distribution $\mu_M$ is absolutely continuous.  Let $\epsilon>0$ be given, and $f_1$ be a continuous function such that 
$$
f_1(x)=\begin{cases} 1 & \text{ if } x\geq 0 \\ \in [0,1] &\text{ if } -\epsilon <x< 0\\ 0 & \text{ if } x<-\epsilon.
\end{cases}
$$
Then it follows from Propositions \ref{LimitDistrib} and \ref{SumRand} that 
$$ \overline{\delta}(\mathcal{M})\leq \lim_{x\to\infty} \frac{1}{\log x}\int_2^xf_1\big(E_M(t)\big)\frac{dt}{t}=\int_{-\infty}^{\infty} f_1(t)d\mu_M \leq \mu_M(-\epsilon, \infty)=\pr(Z>0)+O(\epsilon),$$
since $\mu_M$  is absolutely continuous. 
Similarly, if $f_2$ is a continuous function such that 
$$
f_2(x)=\begin{cases} 1 & \text{ if } x\geq \epsilon \\ \in [0,1] &\text{ if } 0 < x< \epsilon  \\ 0 & \text{ if } x\leq 0.
\end{cases}
$$
Then 
$$ \underline{\delta}(\mathcal{M})\geq \lim_{x\to\infty} \frac{1}{\log x}\int_2^xf_2\big(E_M(t)\big)\frac{dt}{t}=\int_{-\infty}^{\infty} f_2(t)d\mu_M \geq \mu_M(\epsilon, \infty)=\pr(Z>0)+O(\epsilon).$$
Therefore, letting $\epsilon\to 0$ we deduce that 
\begin{equation}\label{Density}
\delta(\mathcal{M})=\pr(Z>0).
\end{equation}
Assuming RH and LI, Rubinstein and Sarnak \cite{RuSa} proved that the limiting logarithmic distribution of $(\pi(x)-\text{Li}(x))(\log x)/\sqrt{x}$ is the probability distribution of the random variable 
$$ \widetilde{Z}= -1+ 2\re\sum_{\gamma_n>0}\frac{X(\gamma_n)}{\sqrt{\frac{1}{4}+\gamma_n^2}}.$$
Since  the $X(\gamma_n)$ are symmetric random variables, it follows that $Z$ and $-\widetilde{Z}$ have the same distribution and hence
$$ P(Z>0)= P(\widetilde{Z}<0)= 1-P(\widetilde{Z}>0).$$
Finally, it follows from the same argument leading to \eqref{Density} that $P(\widetilde{Z}>0)$ is the logarithmic density of the set of real numbers $x\geq 2$ for which $\pi(x)>\text{Li}(x)$ and hence, from the computations of Rubinstein and Sarnak \cite{RuSa} we have $P(Z>0)=0.99999973...$

\end{proof}

In the remaining part of this section, we shall explain the heuristic behind Conjecture \ref{MaxMin}.  Note that 
$$
 \prod_{p\leq x}\left(1-\frac{1}{p}\right)^{-1}= e^{\gamma}(\log x) \exp\left(\frac{E_M(x)}{\sqrt{x}\log x}\right).
$$
Moreover, by Corollary \ref{ExplicitRH} and the Riemann-von  Mangoldt formula we have 
$$ E_M(x)\ll 1+ \sum_{0<\gamma_n< x}\frac{1}{\gamma_n}\ll (\log x)^2.$$
Therefore, we deduce that 
\begin{equation}\label{Approx3}
\begin{aligned}
\prod_{p\leq x}\left(1-\frac{1}{p}\right)^{-1}
&= e^{\gamma}(\log x) \left(1+ \frac{E_M(x)}{\sqrt{x}\log x}+ O\left(\frac{(\log x)^2}{x}\right)\right)\\
&= e^{\gamma}\log x+ e^{\gamma} \frac{E_M(x)}{\sqrt{x}}+ O\left(\frac{(\log x)^3}{x}\right).
\end{aligned}
\end{equation}
Furthermore, by the argument in the proof of Theorem \ref{Bias} we have, under RH and LI, that
\begin{equation}\label{Convergence}
\lim_{Y\to \infty}\frac1Y\text{meas}\left\{ 1\leq y\leq Y: E_M(e^y)>V\right\}=\pr(Z>V).
\end{equation}
Improving on a result of Montgomery \cite{Mont}, Monach \cite{Mona} showed that for $V\gg 1$ we have
$$ \pr(Z>V)= \exp\left(-C_0 \sqrt{V} \exp\left(\sqrt{2\pi V}\right)(1+o(1))\right),$$
for some explicit constant $C_0>0$. 
Therefore, if the convergence in \eqref{Convergence} is ``sufficiently uniform'' in $Y$, then one would deduce that 
$$ \sup_{1\leq y\leq Y} E_m(e^y)= \left(\frac{1}{2\pi}+o(1)\right)(\log\log y)^2,$$
and 
$$\inf_{1\leq y\leq Y} E_m(e^y)= \left(-\frac{1}{2\pi}+o(1)\right)(\log\log y)^2.$$
Inserting these estimates in \eqref{Approx3} yields Conjecture \ref{MaxMin}.

\end{document}